\newtheorem{Th}{Theorem}[section] 
\newtheorem{Lem}[Th]{Lemma} 
\newtheorem{Prop}[Th]{Proposition} 
\newtheorem{Def}[Th]{Definition} 
\newtheorem{Rem}[Th]{Remark}
\newtheorem{Alg}[Th]{Algorithm}
\def\R{{\mathbb R}}
\def\C{{\mathcal C}}
\def\H{{\mathcal H}}
\newcommand{\argmin}{\mathop{\rm arg~min}\limits}
\begin{document}

\title*{Deformation problem for glued elastic bodies
  and an alternative iteration method} 
\author{Masato Kimura and Atsushi Suzuki}
\institute{
Masato Kimura \at Faculty of Mathematics and Physics, Kanazawa University,
Kakuma, Kanazawa, 920-1192, Japan, \email{mkimura@se.kanazawa-u.ac.jp}
\and 
Atsushi Suzuki \at Cybermedia Center, Osaka University,
  Machikaneyama, Toyonaka, Osaka, 560-0043, Japan,
  \email{atsushi.suzuki@cas.cmc.osaka-u.ac.jp}
}

%
%
\maketitle

\abstract{
  We study a mathematical model for deformation of glued elastic bodies in 2D or 3D,
  which is a linear elasticity system with adhesive force on the glued surface.
  We reveal a variational structure of the model
  and prove the unique existence of a weak solution based on it.
  Furthermore, we also consider an alternating iteration method
  and show that it is nothing but an alternating minimizing method
  of the total energy.
  The convergence of a monolithic formulation and the
  alternating iteration method are numerically studied with the finite element method.}

\section{Introduction}
\label{sec:1}
We consider a mathematical model which describes
deformation of two elastic bodies glued to each other
on a surface. 
The understanding of such glued structure 
or adhesive bonding process is important in industrial and scientific applications,
especially in the case that
the mechanical bonding
technique exhibits
its disadvantages comparing with the adhesive one, e.g.
bonding between soft materials, or one between very small scaled materials.
The importance of mathematical modeling
and numerical simulation is increasing
in the design of desirable mechanical properties of
composite materials with glued layer structure.

In mathematics, M. Fr\'emond \cite{Fremond2002}
and T. Roub\'i\v{c}ek {\it et al.} \cite{RSZ2009}
proposed mathematical models of such glued structure
and its delamination process.
R. Scala \cite{Scala2017} studied more extended
delamination model
with kinetic and viscoelastic terms
and proved existence of a solution.
For further mathematical studies on the delamination process,
we refer the above works and references therein.

In this paper, we concentrate on the stationary
deformation problem of the glued structure,
which is a linear elasticity system with adhesive force on the glued surface.
A similar stationary problem also appears in
the implicit time discretization of the above delamination models
\cite{Yoneda2018}.

The outline of this paper is as follows.
In Section~\ref{sec:2}, we describe the deformation model
and give a definition of a weak solution.
Section~\ref{sec:3} is devoted to
review several known consequences from the coercivity
of a bilinear form; the existence and the uniqueness
of a weak solution, a variational principle,
and an error estimate of a finite element approximation.

For the purpose of efficient numerical computation of
the obtained weak form of our model in 2D and 3D,
we propose an alternating iteration method in Section~\ref{sec:4}.
The alternating iteration method
was proposed in \cite{Yoneda2018}
and was used to simulate the vibration-delamination model proposed in \cite{Scala2017}.
We will show that it is nothing but
an alternating energy minimization procedure.
In particular, it generates
a sequence of displacements which monotonically
decreases the total energy (Theorem~\ref{GS-energy-decay}).

In Section~\ref{sec:5}, we consider finite element discretization.
We give discrete forms of the monolithic method and the alternating iteration method,
and prove that the sequence generated by the alternating iteration method
converges to the discrete solution by the monolithic method. 
Those theoretical results is also supported by
numerical experiments in three dimensional setting. 

\section{Deformation of glued elastic bodies}
\label{sec:2}
We consider a bounded Lipschitz domain $\Omega\subset \R^d,~(d=2,3)$.
We suppose that
\begin{equation*}
  \Omega\setminus\Gamma=\Omega_1\cup\Omega_2,\quad
  \Omega_1\cap\Omega_2 =\emptyset,
\end{equation*}
where $\Gamma$ is a Lipschitz surface which is the common boundary of two disjoint Lipschitz domains
$\Omega_1$ and $\Omega_2$ as shown in Fig.~\ref{ex1} and Fig.~\ref{ex2}.
We denote by $\nu$ the unit normal vector on $\Gamma$ pointing from $\Omega_1$ into $\Omega_2$,
and the one on $\partial\Omega$ pointing outward.
We suppose that the boundary $\partial \Omega$ is decomposed to the following
disjoint portions:
\begin{equation*}
  \partial\Omega=\partial_D\Omega\cup\overline{\partial_N\Omega},~~
  \partial_D\Omega\cap\overline{\partial_N \Omega}=\emptyset,~~
 \H^{d-1}(\overline{\partial_N \Omega}\setminus \partial_N \Omega)=0,
\end{equation*}
where $\partial_D\Omega$ and $\partial_N\Omega$ are relatively open subsets of $\partial\Omega$
and $\H^{d-1}$ denotes the $d-1$ dimensional Hausdorff measure.
We also define $\partial_D\Omega_i:=\partial_D\Omega\cap\partial\Omega_i$
and 
$\partial_N\Omega_i:=\partial_N\Omega\cap\partial\Omega_i$ for $i=1,2$.
We assume that $\partial_D\Omega$ is a nonempty relatively open subset of $\partial \Omega$, and 
$\H^{d-1}(\partial_D\Omega_1)$ or $\H^{d-1}(\partial_D\Omega_2)$ is positive. 
Without loss of generality, we always suppose $\H^{d-1}(\partial_D\Omega_1)>0$ 
 (Fig.~\ref{ex1}, Fig.~\ref{ex2}).

\begin{figure}[h]
  \begin{minipage}{0.49\textwidth}
    \begin{center}
      \includegraphics[bb= 0 650 250 850,clip,width=1.1\textwidth]{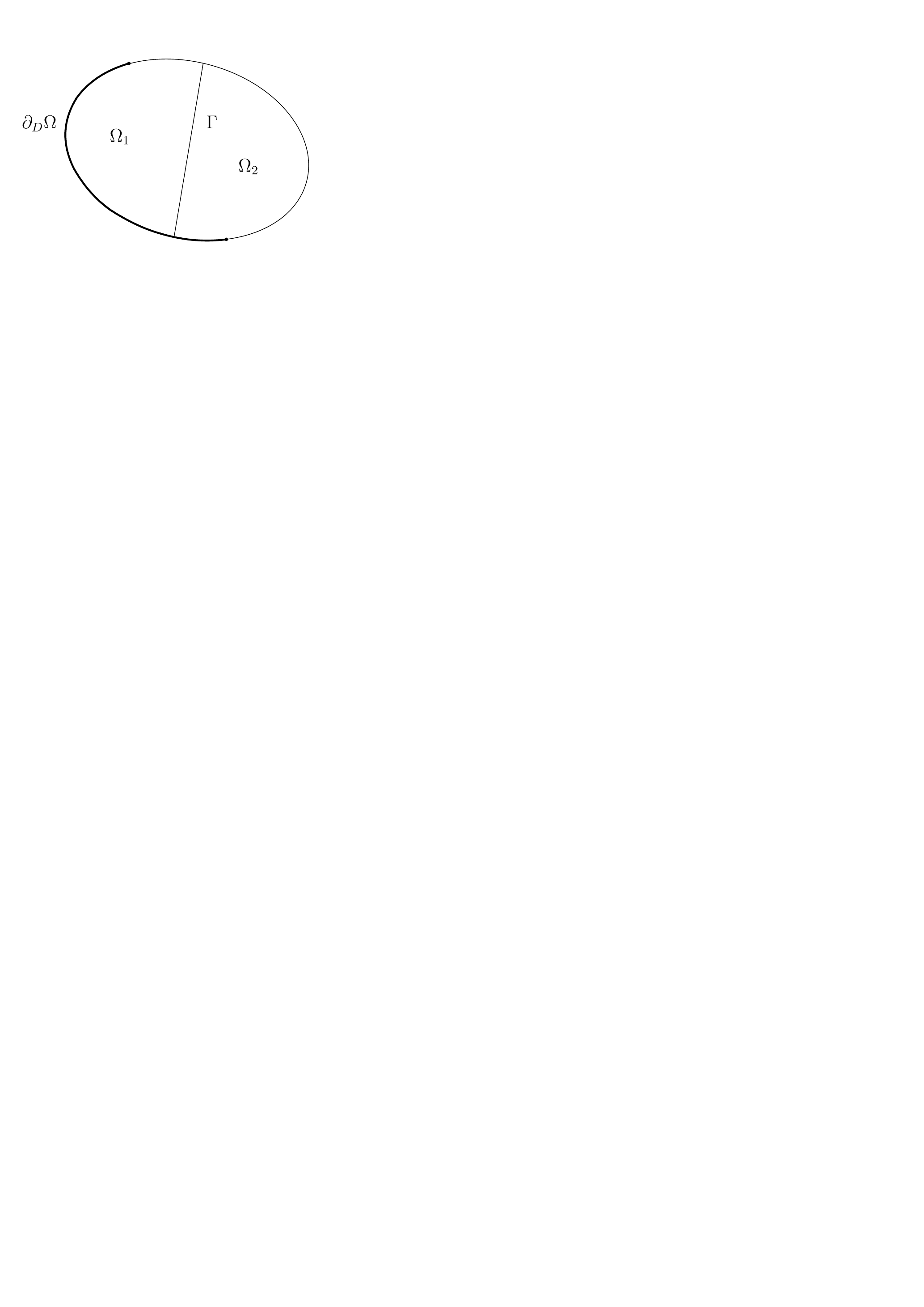}
    \end{center}
    \caption{$\partial_D\Omega\subset\partial\Omega_1\cup\partial\Omega_2$}
    \label{ex1}
  \end{minipage}
    \begin{minipage}{0.49\textwidth}
    \begin{center}
      \includegraphics[bb= 0 650 250 850,clip,width=1.1\textwidth]{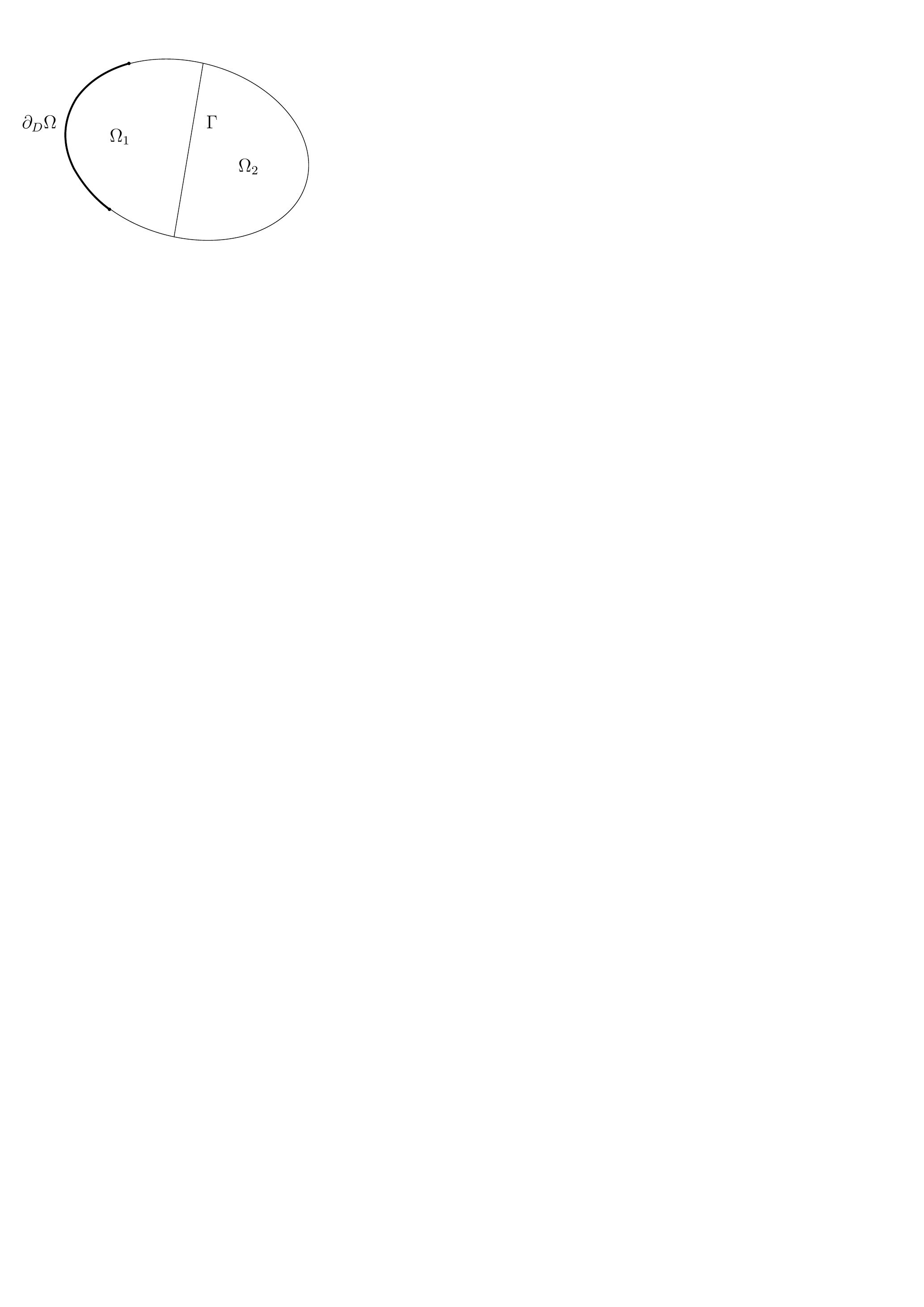}
    \end{center}
    \caption{$\partial_D\Omega\subset\partial\Omega_1$}
    \label{ex2}
  \end{minipage}
\end{figure}

In this paper, we consider the following stationary deformation model of two elastic bodies $\Omega_1$ and $\Omega_2$
which are glued by an adhesive on $\Gamma$.
We consider an adhesion force but ignore a friction force on the interface.

The problem is to find $u:\Omega\setminus\Gamma\to\R^d$ such that:
\begin{equation}\label{model}
  \left\{
  \begin{array}{ll}
    -\text{div}\,\sigma (u)
    =f(x)
    & (x\in\Omega\setminus\Gamma),\\
    u=g(x) & (x\in\partial_D\Omega),\\
    \sigma(u)\nu=q(x) & (x\in\partial_N\Omega),\\
    \sigma(u_1)\nu=\zeta (x) [u]=\sigma(u_2)\nu\hspace{20pt}
    & (x\in \Gamma ).
  \end{array}
  \right.
\end{equation}
The meanings of the above symbols are as follows.
We use the Einstein summation convention in this section.
For matrices $\xi=(\xi_{kl}),~\eta=(\eta_{kl})\in \R^{d\times d}$,  
we denote their component-wise inner product by 
$\xi :\eta :=\xi_{kl}\eta_{kl}$ 
and the norm by $|\xi |:=\sqrt{\xi :\xi}$.

The solution $u$ is a displacement field on
$\Omega\setminus\Gamma =\Omega_1\cup\Omega_2$.
We denote $u|_{\Omega_i}$ by $u_i$ for $i=1,2$,
and often write $u=(u_1,u_2)$.
The symmetric gradient of
$u$ is defined by
\begin{equation*}
  e(u):=\frac{1}{2}\left(\nabla u^{\mathrm{T}}
  +\left(\nabla u^{\mathrm{T}}\right)^{\mathrm{T}}\right)
  \in \R^{d\times d}_{\mathrm{sym}}.
\end{equation*}
The stress tensor $\sigma(u)\in\R^{d\times d}_{\mathrm{sym}}$
satisfies the constitutive relation
\begin{equation*}
  \sigma (u):=\C e(u)=\left( c_{klmn}\,e_{mn}(u)\right)_{k,l} 
  \in \R^{d\times d}_{\mathrm{sym}},
\end{equation*}
where $\C:=\C(x)=(c_{klmn}(x))\in\R^{d\times d\times d\times d}$
is the elasticity tensor with the symmetries $c_{klmn}=c_{mnkl}=c_{lkmn},(1\leq k,l,m,n\leq d)$. 

We assume that $\C\in L^\infty(\Omega;\R^{d\times d\times d\times d})$ and
there exists $c_*>0$ such that
\begin{equation*}
c_{klmn}(x)\xi_{kl}\xi_{mn}\geq c_*|\xi|^2  
\quad
(\text{a.e.}~x\in \Omega,~\xi \in\R^{d\times d}_{\mathrm{sym}}).
\end{equation*}

The first equation of \eqref{model} is the force balance
equation in each subdomain $\Omega_i$, where $f$ is a given body force.
The second and third equations
are Dirichlet and Neumann boundary conditions, where
$g$ is a given displacement on $\partial_D\Omega$
and $q$ is a given surface traction on $\partial_N\Omega$.

In the fourth equation,
$\zeta(x)\ge 0$ is a given adhesive parameter on the
glued surface $\Gamma$ which
represents the strength of adhesive bonding.
The adhesive force at $x\in\Gamma$ is assumed to be
$\zeta(x)[u]$,
where $[u(x)]:=(u_2(x)-u_1(x))$ is the gap of the displacement
$u=(u_1,u_2)$ on $\Gamma$.
It should be balanced
with the surface traction force $\sigma(u_1(x))\nu$
on $\Omega_1$ side and also with 
$\sigma(u_2(x))\nu$
on $\Omega_2$ side.

To consider a weak formulation of \eqref{model},
we introduce the following spaces.
\begin{align*}
  &X_i:= H^1(\Omega_i;\R^d),\quad
  V_i:=\{u_i\in X_i;~u_i|_{\partial_D\Omega_i}=0\}
  \quad (i=1,2),\\
  &X:=H^1(\Omega\setminus\Gamma;\R^d)
  \cong X_1\times X_2,\quad
  V:=\{u\in X;~u|_{\partial_D\Omega}=0\}  \cong V_1\times V_2.
\end{align*}
For $g=(g_1,g_2)\in X$, we also define
affine spaces:
\begin{align*}
  V(g):=V+g=\{u\in X;~u-g\in V\},\quad
  V_i(g_i):=V_i+g_i\quad (i=1,2).
\end{align*}
\begin{Def}[Weak solution]
{\rm 
We suppose that a Dirichlet boundary data $g\in X$,
a body force $f\in L^2(\Omega;\R^d)$, a surface traction
$q\in L^2(\partial_N\Omega;\R^d)$, and an adhesive coefficient
$\zeta \in L^\infty (\Gamma)$, $\zeta (x)\ge 0$
are given.
Then, we call $u$ a weak solution of \eqref{model}
if
\[
u\in V(g),\quad
a_0(u,v)=l_0(v)~~\mbox{for all }v\in V,
\]
where
\begin{align}
  &a_0(u,v):=\int_{\Omega\setminus\Gamma}
  \sigma(u):e(v)\,dx
  +\int_\Gamma\zeta [u]\cdot[v]\,ds
  \quad (u,v\in X),\label{a_0}\\
&l_0(v):=\int_{\Omega\setminus\Gamma}f\cdot v\,dx+\int_{\partial_N\Omega}q\cdot v\,ds,
  \quad(v\in X).\notag
\end{align}
}
\end{Def}

The bilinear form $a_0$ and the linear form $l_0$ are decomposed into sum of the
following subforms:
\begin{align*}
&a_0(u,v)=\sum_{i=1}^2 a_i(u_i,v_i)+a_\Gamma ([u],[v])\quad (u,~v\in X),\\
  &a_i(u_i,v_i):=
  \int_{\Omega_i}\sigma(u_i):e(v_i)\,dx,~(i=1,2),
  \quad
  a_\Gamma(u,v):=\int_\Gamma\zeta u\cdot v\,ds.\\
  &l_0(v)=\sum_{i=1}^2l_i(v_i),
  \quad
  l_i(v_i):=\int_{\Omega_i}f\cdot v_i\,dx+\int_{\partial_N\Omega_i}q\cdot v_i\,ds,
  ~~(v_i\in X_i,~i=1,2)
\end{align*}

We remark that, if $c_{klmn}\in C^1(\overline{\Omega_i})$ for $i=1,2$, then
a strong solution of \eqref{model}, i.e.,
$u\in H^2(\Omega\setminus\Gamma;\R^d)$ which satisfies \eqref{model}
almost everywhere on $\Omega\setminus\Gamma$ or on $\partial\Omega\cup\Gamma$,
is a weak solution. On the other hand, if a weak solution belongs to
$H^2(\Omega\setminus\Gamma;\R^d)$, then it is a strong solution.

\section{Unique existence of a weak solution}
\label{sec:3}

For establishing the unique existence of a weak solution,
the coercivity of the bilinear form $a_0(u,v)$
defined in \eqref{a_0} is essential.
\begin{Th}[Coercivity of $a_0$]\label{th-coercivity}
We suppose that 
$\zeta(x) \ge 0$ and $\|\zeta \|_{L^\infty(\Gamma)}>0$.
Then there exists $a_*>0$ such that
$a_0(v,v)\ge a_*\|v\|_X^2$ holds for all $v\in V$.
\end{Th}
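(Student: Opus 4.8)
The plan is to argue by contradiction using a compactness argument. This is the natural route here because the hypotheses do \emph{not} provide a Dirichlet condition on $\Omega_2$ (we assume only $\H^{d-1}(\partial_D\Omega_1)>0$), so coercivity on the $\Omega_2$-component cannot be obtained directly from a boundary-value Korn inequality; it must be recovered through the adhesive coupling on $\Gamma$.

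First I would record the elementary lower bound coming from the data. The ellipticity of $\C$ gives $\sigma(v):e(v)=c_{klmn}e_{mn}(v)e_{kl}(v)\ge c_*|e(v)|^2$ pointwise, so
\[
a_0(v,v)\ge c_*\sum_{i=1}^2\|e(v_i)\|_{L^2(\Omega_i)}^2+\int_\Gamma\zeta\,|[v]|^2\,ds,
\]
where both terms are nonnegative. Now suppose the conclusion fails. Then there is a sequence $v^{(n)}\in V$ with $\|v^{(n)}\|_X=1$ and $a_0(v^{(n)},v^{(n)})\to0$, whence $\|e(v_i^{(n)})\|_{L^2(\Omega_i)}\to0$ for $i=1,2$ and $\int_\Gamma\zeta|[v^{(n)}]|^2\,ds\to0$. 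Since $\{v^{(n)}\}$ is bounded in $X$, after passing to a subsequence I may assume $v^{(n)}\rightharpoonup v$ weakly in $X$ and, by the Rellich embedding $H^1(\Omega_i)\hookrightarrow L^2(\Omega_i)$, strongly in $L^2$. Weak lower semicontinuity forces $e(v_i)=0$, so each $v_i$ is an infinitesimal rigid motion $v_i(x)=a_i+B_ix$ with $B_i$ antisymmetric. Applying Korn's second inequality $\|w\|_{H^1(\Omega_i)}\le C(\|e(w)\|_{L^2(\Omega_i)}+\|w\|_{L^2(\Omega_i)})$ to $w=v_i^{(n)}-v_i$ upgrades the convergence to strong convergence in $X$; hence $\|v\|_X=1$.

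It remains to show $v=0$, which is the crux. The constraints pass to the limit by continuity of the trace. From $v_1^{(n)}\to v_1$ in $X$ we get $v_1|_{\partial_D\Omega_1}=0$; since a nonzero rigid motion on $\R^d$ has zero set contained in an affine subspace of dimension at most $d-2$ (a point when $d=2$, a line when $d=3$), that zero set has vanishing $\H^{d-1}$ measure, so the assumption $\H^{d-1}(\partial_D\Omega_1)>0$ forces $v_1\equiv0$. Next, continuity of the trace on $\Gamma$ gives $[v^{(n)}]\to[v]$ in $L^2(\Gamma)$, and since $\zeta\in L^\infty(\Gamma)$ we may pass to the limit in the interface integral to obtain $\int_\Gamma\zeta|[v]|^2\,ds=0$. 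With $\zeta\ge0$ this means $[v]=v_2-v_1=v_2$ vanishes a.e. on $\{\zeta>0\}$, which is a subset of $\Gamma$ of positive $\H^{d-1}$ measure because $\|\zeta\|_{L^\infty(\Gamma)}>0$. The same rigid-motion rigidity then yields $v_2\equiv0$, so $v=0$, contradicting $\|v\|_X=1$.

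The main obstacle is precisely this final step: unlike a standard single-domain Korn argument, the Dirichlet data cannot pin down the rigid motion $v_2$ on $\Omega_2$. One must exploit that the adhesive set $\{\zeta>0\}$ carries positive surface measure, so that once $v_1$ is known to vanish, the interface term rigidifies $v_2$ as well. I would isolate the rigidity fact (a nonzero infinitesimal rigid motion vanishes only on an $\H^{d-1}$-null set) as the one non-routine lemma, and otherwise rely on Korn's second inequality, Rellich compactness, and continuity of the trace, all of which are standard.
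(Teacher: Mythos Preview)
Your contradiction argument is correct and complete: you correctly identify that the delicate point is to kill the rigid motion in $\Omega_2$ via the adhesive coupling, and the rigidity lemma you invoke (a nonzero infinitesimal rigid motion vanishes only on an $\H^{d-1}$-null set) does the job once $v_1\equiv 0$ and $\H^{d-1}(\{\zeta>0\})>0$ are in hand. The upgrade from weak to strong $H^1$ convergence via Korn's second inequality is also handled cleanly.

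As for comparison with the paper: the paper does \emph{not} actually prove this theorem. It explicitly postpones the proof, citing an argument by contradiction in \cite{Yoneda2018} and promising a ``simpler proof'' in a forthcoming paper; it only remarks that the case $\H^{d-1}(\partial_D\Omega_i)>0$ for both $i=1,2$ follows readily from Korn's second inequality. Your argument is precisely a contradiction argument of the type the paper attributes to \cite{Yoneda2018}, and it covers the harder case where $\partial_D\Omega_2$ may be empty, which is the case the paper flags as requiring real work.
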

A slightly long proof of this theorem using an argument by contradiction
was given in \cite{Yoneda2018} and another simpler proof
will be given in our forthcoming paper.
We postpone the proof to it and here we just remark that
the case of $\H^{d-1}(\partial_D\Omega_i)>0$ for both $i=1,2$
is relatively easily shown by using K\"{o}rn's second inequality
\cite{D-L1976}.

As a consequence of Theorem~\ref{th-coercivity}, we immediately
have the unique existence of a weak solution.
\begin{Th}[Unique existence]\label{th-uni-ex}
We suppose that Dirichlet boundary data $g\in X$,  
a body force $f\in L^2(\Omega;\R^d)$, a surface traction
$q\in L^2(\partial_N\Omega;\R^d)$
are given.
Then, under the conditions of Theorem~\ref{th-coercivity},
there exists a unique weak solution to \eqref{model}.
\end{Th}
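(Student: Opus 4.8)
The plan is to recast the weak formulation as a linear variational equation on the Hilbert space $V$ and then invoke the Lax--Milgram theorem, with the coercivity supplied by Theorem~\ref{th-coercivity}. Note first that $V=\{u\in X;\ u|_{\partial_D\Omega}=0\}$ is a closed subspace of $X=H^1(\Omega\setminus\Gamma;\R^d)$, hence itself a Hilbert space for the norm $\|\cdot\|_X$.

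I would begin by reducing the affine problem to a homogeneous one. Writing the unknown as $u=w+g$ with $w\in V$ and using the bilinearity of $a_0$, the requirement $a_0(u,v)=l_0(v)$ for all $v\in V$ is equivalent to
\[
a_0(w,v)=l_0(v)-a_0(g,v)=:\tilde l(v)\quad(v\in V).
\]
It therefore suffices to produce a unique $w\in V$ solving this equation, since then $u=w+g\in V(g)$ is the unique weak solution.

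Next I would verify the three hypotheses of Lax--Milgram on $(V,\|\cdot\|_X)$. Coercivity of $a_0$ on $V$ is exactly Theorem~\ref{th-coercivity}. For boundedness of $a_0$, I would estimate the volume terms using $\C\in L^\infty$ and the Cauchy--Schwarz inequality, and the interface term $a_\Gamma([u],[v])$ using $\zeta\in L^\infty(\Gamma)$ together with the trace theorem, which dominates $\|[v]\|_{L^2(\Gamma)}$ by $C\|v\|_X$; this yields $|a_0(u,v)|\le M\|u\|_X\|v\|_X$. For boundedness of the right-hand side, $l_0$ is controlled by $f\in L^2(\Omega)$ and $q\in L^2(\partial_N\Omega)$ (again via the trace theorem on $\partial_N\Omega$), while the term $a_0(g,\cdot)$ is bounded because $g\in X$ and $a_0$ is bounded; hence $\tilde l$ is a bounded linear functional on $V$.

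With these properties in hand, the Lax--Milgram theorem furnishes a unique $w\in V$ with $a_0(w,v)=\tilde l(v)$ for all $v\in V$, which completes the argument. The sole genuinely nontrivial ingredient is the coercivity, which is already granted; the remaining estimates are routine, the only point requiring care being the trace inequalities used to dominate the surface integrals over $\Gamma$ and $\partial_N\Omega$ by the full $H^1$-norm. I note finally that, since the symmetries $c_{klmn}=c_{mnkl}$ render $a_0$ symmetric, the solution could equivalently be obtained as the unique minimizer of $\tfrac12 a_0(u,u)-l_0(u)$ over $V(g)$, anticipating the variational principle mentioned in Section~\ref{sec:3}.
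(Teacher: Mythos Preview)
Your proof is correct and follows essentially the same route as the paper: reduce to a homogeneous problem by writing $u=w+g$, then apply Lax--Milgram on $V$ using the coercivity from Theorem~\ref{th-coercivity} together with the (routine) continuity of $a_0$ and $l_0$. You supply more detail than the paper does on the continuity estimates (trace inequalities for the $\Gamma$ and $\partial_N\Omega$ terms), and your sign in $\tilde l(v)=l_0(v)-a_0(g,v)$ is in fact the correct one.
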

\begin{proof}
\smartqed
Under the conditions, from the definitions of $a_0$ and $l_0$,
we can show that 
$a_0$ is a continuous symmetric bilinear form on
$X\times X$ and $l_0$ is a continuous linear form on $X$.

We set $\tilde{u}:=u-g$. Then $u$ is a weak solution to \eqref{model}
if and only if
\begin{align}\label{tildeu}
\tilde{u}\in V,\quad a_0(\tilde{u},v)=l_0(v)+a_0(g,v)\quad (v\in V).
\end{align}
From the Lax-Milgram lemma \cite{Ciarlet1978} and Theorem~\ref{th-coercivity},
there exists a unique $\tilde{u}$ which satisfies \eqref{tildeu}.
Hence the unique existence of the weak solution has been proved.
\qed
\end{proof}

A variational principle for the above symmetric Lax-Milgram type problem
is also well known. The weak solution $u^*\in V(g)$ is a unique minimizer of the
following energy:
\begin{align}
u^*=\argmin_{u\in V(g)}E(u),\label{argmin}
\end{align}
where
\begin{align}\label{E(u)}
E(u):=\frac{1}{2}a_0(u,u)-l_0(u).
\end{align}

In Section~\ref{sec:5}, we consider a finite element approximation
for our model \eqref{model}. So called C\'ea's lemma \cite{Ciarlet1978}
implies the following error estimate.
We define
\[
a^*:=\sup_{v,w\in V}\frac{a_0(v,w)}{\|v\|_X\|w\|_X}<\infty .
\]
\begin{Prop}\label{cea}
Under the assumptions of Theorem~\ref{th-uni-ex},
we suppose that $V_h$ is a closed subspace of $V$.
Then there uniquely exists $u_h$
such that
\begin{align}\label{uh}
u_h\in V_h(g):=V_h+g,\quad a_0(u_h,v_h)=l_0(v_h)\quad (v_h\in V_h).
\end{align}
Furthermore, it satisfies
\[
\| u-u_h\|_X\le \frac{a^*}{a_*}\inf_{v_h\in V_h(g)} \|u-v_h\|_X.
\]
\end{Prop}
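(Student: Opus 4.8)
The plan is to prove the two assertions separately: first the well-posedness of the discrete problem, then the quasi-optimal error bound. Both reduce to applying, on the closed subspace $V_h$, the same structural facts (continuity, symmetry, and coercivity of $a_0$) that underlie Theorem~\ref{th-uni-ex}.

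For existence and uniqueness, I would mimic the reduction used in the proof of Theorem~\ref{th-uni-ex}. Writing $u_h=\tilde{u}_h+g$, problem \eqref{uh} is equivalent to finding $\tilde{u}_h\in V_h$ with $a_0(\tilde{u}_h,v_h)=l_0(v_h)-a_0(g,v_h)$ for all $v_h\in V_h$. Since $V_h$ is a closed subspace of the Hilbert space $V$, it is itself a Hilbert space; the restriction of $a_0$ to $V_h\times V_h$ is continuous and, by Theorem~\ref{th-coercivity}, coercive with the same constant $a_*$, while the right-hand side is a continuous linear form on $V_h$. The Lax--Milgram lemma then yields a unique such $\tilde{u}_h$, hence a unique $u_h\in V_h(g)$.

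For the error estimate, the key step is Galerkin orthogonality. Both $u\in V(g)$ and $u_h\in V_h(g)\subset V(g)$, so $u-u_h\in V$; subtracting $a_0(u_h,v_h)=l_0(v_h)$ from $a_0(u,v_h)=l_0(v_h)$ (valid since $v_h\in V_h\subset V$) gives $a_0(u-u_h,v_h)=0$ for all $v_h\in V_h$. Now fix any $v_h\in V_h(g)$; then $v_h-u_h\in V_h$, and coercivity together with this orthogonality and continuity give
\begin{align*}
a_*\|u-u_h\|_X^2 &\le a_0(u-u_h,u-u_h)=a_0(u-u_h,u-v_h)+a_0(u-u_h,v_h-u_h)\\
&=a_0(u-u_h,u-v_h)\le a^*\|u-u_h\|_X\|u-v_h\|_X.
\end{align*}
Dividing by $\|u-u_h\|_X$ (the estimate being trivial when $u=u_h$) and taking the infimum over $v_h\in V_h(g)$ yields the claimed bound.

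I do not anticipate a genuine obstacle here: the argument is the textbook C\'ea estimate, and the only points requiring care are bookkeeping with the affine spaces $V(g)$ and $V_h(g)$ — ensuring that the relevant differences lie in the linear spaces $V$ and $V_h$ so that coercivity and orthogonality apply — and confirming that $a^*$ is indeed finite and serves as the continuity constant, which follows from the boundedness of $a_0$ already noted in the proof of Theorem~\ref{th-uni-ex}.
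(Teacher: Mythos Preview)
Your argument is correct and is precisely the standard proof of C\'ea's lemma. The paper itself does not supply a proof of this proposition; it simply invokes C\'ea's lemma with a reference to \cite{Ciarlet1978}, so your write-up effectively fills in what the paper leaves to the literature.
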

The problem \eqref{uh} corresponds to the finite element scheme.
If $V_h$ is a space of 
piece-wise linear element (P1 element) on a regular triangular mesh,
it is known that $\inf_{v_h\in V_h(g)} \|u-v_h\|_X=O(h)$ as
the mesh size $h$ tends to $0$ under suitable regularity for
$u$ and the triangular mesh \cite{Ciarlet1978}.

 \section{Alternating iteration method}   
\label{sec:4}

We remark that $u^*=(u_1^*,u_2^*)\in V(g)$ is a weak solution to \eqref{model}
if and only if 
\begin{align}
  &a_1(u_1^*,v_1)+a_\Gamma (u_1^*,v_1)=a_\Gamma (u_2^*,v_1)+l_1(v_1)
  \quad (^\forall v_1\in V_1),\label{u*1}\\
  &a_2(u_2^*,v_2)+a_\Gamma (u_2^*,v_2)=a_\Gamma (u_1^*,v_2)+l_2(v_2)
  \quad (^\forall v_2\in V_2).\label{u*2}
\end{align}

We consider the following alternating method. 

\noindent
{\bf Gauss-Seidel type scheme}\\    
For given $u_2^0\in V_2(g)$, and  
for $m=0,1,2,\cdots$, find $u^m=(u_1^m,u_2^m)\in V(g)$ such that
\begin{align}
  &a_1(u_1^m,v_1)+a_\Gamma (u_1^m,v_1)=a_\Gamma (u_2^{m-1},v_1)+l_1(v_1)
  \quad (^\forall v_1\in V_1,~m=1,2,\cdots ),\label{uGS1}\\
  &a_2(u_2^m,v_2)+a_\Gamma (u_2^m,v_2)=a_\Gamma (u_1^m,v_2)+l_2(v_2)
  \quad (^\forall v_2\in V_2,~m=1,2,\cdots ).\label{uGS2}
\end{align}

We call the above alternating iteration method ``Gauss-Seidel type''
by analogy with an iterative solver for linear systems. 
The unique solvability of each $u_i^m$ is clear from K\"{o}rn's second
inequality.

The following theorem tells us that the Gauss-Seidel type scheme 
defines a sequence $\{u^m\}$ which monotonically
decreases the total energy $E(u)$ defined in \eqref{E(u)}.
\begin{Th}\label{GS-energy-decay}
  The obtained sequence $\{u^m=(u_1^m,u_2^m)\}_{m}$ by the Gauss-Seidel type scheme  
  satisfies the following energy decay property:
  \begin{align}\label{EEE}
  E((u_1^{m-1},u_2^{m-1}))\ge   E((u_1^m,u_2^{m-1}))\ge E((u_1^m,u_2^m)) 
  \quad
  (m=1,2,\cdots).
  \end{align}
\end{Th}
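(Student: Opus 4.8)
The plan is to recognize the Gauss--Seidel scheme \eqref{uGS1}--\eqref{uGS2} as an alternating (block) minimization of the total energy $E$ of \eqref{E(u)}, and to extract each of the two inequalities in \eqref{EEE} from an exact "completing the square" identity. First I would introduce the two symmetric, positive semidefinite bilinear forms $b_1(\cdot,\cdot):=a_1(\cdot,\cdot)+a_\Gamma(\cdot,\cdot)$ on $V_1$ and $b_2(\cdot,\cdot):=a_2(\cdot,\cdot)+a_\Gamma(\cdot,\cdot)$ on $V_2$; both are nonnegative because $a_i(v_i,v_i)\ge 0$ and $a_\Gamma(v_i,v_i)=\int_\Gamma \zeta\,|v_i|^2\,ds\ge 0$ for $\zeta\ge 0$. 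Positive semidefiniteness of $b_i$, rather than the full coercivity of Theorem~\ref{th-coercivity}, is all the energy-decay inequalities will require; coercivity (K\"orn) is only used to ensure that each $u_i^m$ exists and is unique, as already noted in the text.

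Next, fixing $u_2=u_2^{m-1}$, I would expand $E$ via the decomposition $a_0(u,u)=\sum_i a_i(u_i,u_i)+a_\Gamma([u],[u])$ together with the convention $[u]=u_2-u_1$, and collect the terms depending on $u_1$. Up to an additive constant independent of $u_1$, this produces the quadratic functional $u_1\mapsto \frac{1}{2}b_1(u_1,u_1)-\ell_1(u_1)$ on the affine space $V_1(g_1)$, where $\ell_1(v_1):=a_\Gamma(u_2^{m-1},v_1)+l_1(v_1)$; here expanding $\frac{1}{2}a_\Gamma(u_2^{m-1}-u_1,\,u_2^{m-1}-u_1)$ supplies both the $a_\Gamma(u_1,v_1)$ piece of $b_1$ and the $a_\Gamma(u_2^{m-1},v_1)$ piece of $\ell_1$ (using the symmetry of $a_\Gamma$). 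The key observation is that the stationarity condition $b_1(u_1,v_1)=\ell_1(v_1)$ for all $v_1\in V_1$ is \emph{exactly} \eqref{uGS1}, so $u_1^m$ is the corresponding minimizer. Since any competitor $w_1\in V_1(g_1)$ differs from $u_1^m$ only by some $z:=w_1-u_1^m\in V_1$ (the two Dirichlet data coincide), the exact identity
\[
E((w_1,u_2^{m-1}))-E((u_1^m,u_2^{m-1}))=\big(b_1(u_1^m,z)-\ell_1(z)\big)+\tfrac{1}{2}b_1(z,z)=\tfrac{1}{2}b_1(z,z)\ge 0
\]
holds, the bracket vanishing by \eqref{uGS1}. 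Taking $w_1=u_1^{m-1}$ (the previous iterate, which also lies in $V_1(g_1)$) yields the first inequality in \eqref{EEE}.

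Finally I would run the symmetric argument: fixing $u_1=u_1^m$ and viewing $u_2\mapsto E((u_1^m,u_2))$ over $V_2(g_2)$, the stationarity condition is precisely \eqref{uGS2}, and the same completing-the-square identity with $b_2$ gives $E((u_1^m,w_2))-E((u_1^m,u_2^m))=\frac{1}{2}b_2(w_2-u_2^m,\,w_2-u_2^m)\ge 0$; choosing $w_2=u_2^{m-1}$ delivers the second inequality, and chaining the two proves \eqref{EEE}. I expect the only delicate point to be the bookkeeping in the partial expansion of $E$: isolating the $u_1$-dependent terms, tracking the minus sign coming from $[u]=u_2-u_1$, and verifying that the difference of two admissible states lies in the linear space $V_i$ so that the remainder is exactly $\frac{1}{2}b_i(z,z)$ with no leftover linear term. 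Everything else is a routine identification of the weak forms \eqref{uGS1}--\eqref{uGS2} with first-order optimality conditions.
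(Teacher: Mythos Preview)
Your proof is correct and follows essentially the same approach as the paper: both compute the energy difference between successive half-steps, use the weak equations \eqref{uGS1}--\eqref{uGS2} to kill the linear (first-variation) term, and observe that the remaining quadratic term is nonnegative. The paper carries this out directly with $a_0$ (writing $E(u^{m,m-1})-E(u^{m,m})=\tfrac12 a_0(v,v)+a_0(u^{m,m},v)-l_0(v)$ for $v=(0,u_2^{m-1}-u_2^m)$ and noting the last two terms cancel by \eqref{uGS2}), whereas you repackage the same computation via the partial forms $b_i=a_i+a_\Gamma$; since $a_0((z,0),(z,0))=b_1(z,z)$ and $a_0((0,z),(0,z))=b_2(z,z)$, the two arguments are identical.
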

\begin{proof}
  For simplicity, we denote $(u_1^m,u_2^n)\in V(g)$ by $u^{m,n}$.
The second inequality is shown as follows.
Setting $v=(v_1,v_2):=u^{m,m-1}-u^{m,m}=(0,u_2^{m-1}-u_2^m)$, we have
\begin{align*}
  &\ E(u^{m,m-1})-E(u^{m,m})\\
  =&\
  \frac{1}{2}a_0(u^{m,m-1}+u^{m,m},u^{m,m-1}-u^{m,m})
  -l_0(u^{m,m-1}-u^{m,m})\\
  =&\ 
  \frac{1}{2}a_0(v,v)+a_0(u^{m,m}, v)-l_0(v)\\ 
  \ge &\
  a_0(u^{m,m}, v)-l_0(v)\\ 
  =&\
  a_1(u_1^m, v_1)+a_2(u_2^m,v_2)+a_\Gamma (u_2^m-u_1^m,v_2-v_1) -l_2(v_2)\\  
  =&\ 0,
\end{align*}
where we have used $v_1=0$ 
and \eqref{uGS2} for the last equality. 
The first inequality in \eqref{EEE} is shown in the same way.
\qed
\end{proof}
Since the weak solution $u^*$ is the minimizer of the total energy $E$
as written in \eqref{argmin}, the sequence generated by the Gauss-Seidel type
scheme is expected to approximate $u^*$. We will study it numerically in
the next section. 
\section{Numerical solution}\label{sec:5}
First we recall the assumption $\H^{d-1}(\partial_D\Omega)>0$.
In this section we only deal with the case 
$\partial_D\Omega\subset\partial\Omega_{1}$ and $\zeta(x)>0$
on $\Gamma$. 

\subsection{A matrix representation of the monolithic formulation}
We briefly review a matrix representation of the monolithic formulation. %
Let $\Lambda^{(i)}$ be an index for finite element basis function and 
be decomposed as $\Lambda^{(i)}_{I}\cup\Lambda^{(i)}_{B}$, corresponding to nodes in the subdomain
$
\overline{\Omega_{i}}
\setminus\Gamma$ and on the common boundary $\Gamma$. 
\par
We define the following stiffness matrices in each subdomain
$\Omega_i$ 
using the bilinear forms $\{a_{i}(\cdot, \cdot)\}_{i=1,2}$ defined in Section \ref{sec:2}.
\begin{align*}
  [A^{(i)}_{\mu\,\nu}]_{k\,l}&=a_{i}(\varphi_{l}^{(i)},\varphi_{k}^{(i)}) &
  k\in \Lambda_{\mu}^{(i)}, 
  l\in\Lambda_{\nu}^{(i)}, \{\mu,\nu\}\in\{I, B\},\\
  [M^{(i\,j)}]_{k\,l}&=a_{\Gamma}(\varphi_{l}^{(j)}, \varphi_{k}^{(i)}) & i,j\in\{1,2\},\ k\in\Lambda_{B}^{(i)},l\in\Lambda_{B}^{(j)}.
\end{align*}
Here combination of four mass matrices $\{M^{(i\,j)}\}$ provides a matrix representation of the bilinear form
$a_\Gamma ([\cdot ], [\cdot ])$. 
A matrix representation of the monolithic formulation reads
\begin{equation}\label{eqn:mono-matrix}
  \begin{bmatrix}
    A_{II}^{(1)} &     A_{IB}^{(1)} &  & \\
    A_{BI}^{(1)} &   \mbox{}\hspace{2mm}  A_{BB}^{(1)}+M^{(11)}\hspace{2mm}\mbox{} &  -M^{(12)} \\
    & -M^{(21)} & A_{BB}^{(2)}+M^{(22)} \hspace{2mm}\mbox{}&   A_{BI}^{(2)} \\
    & & A_{IB}^{(2)} & A_{II}^{(2)} 
  \end{bmatrix}
  \begin{bmatrix}
    u_{1,I} \\ u_{1,B} \\ u_{2,B} \\ u_{2,I}
  \end{bmatrix}
=
  \begin{bmatrix}
    f_{1,I} \\ f_{1,B} \\ f_{2,B} \\ f_{2,I}
  \end{bmatrix},
\end{equation}
where the right hand side consists of the body force $f$ 
and inhomogenous
Dirichlet and Neumann data $g$ and $q$. 
\begin{Rem}
  The monolithic method can be computed by $LDU$-factorization with any symmetric permutation because the stiffness matrix is positive definite thanks to the coercivity of the weak form with
  $\H^{d-1}(\partial_D\Omega)>0$
  (Theorem~\ref{th-coercivity}, Proposition~\ref{cea}). 
\end{Rem}
\subsection{Alternating iterative
method in discrete form  
}
In the following, we suppose that
the nodal points and the surface meshes of the mesh decomposition of 
domains $\Omega_{1}$ and 
$\Omega_{2}$ coincide on the interface $\Gamma$, 
which leads to
\begin{equation*}
  M=M^{(11)}=M^{(12)}=M^{(21)}=M^{(22)}\,.
\end{equation*}
Let us define an inner product of vector $u_{i,\,\mu}$ and $v_{i,\,\mu}$ as  
$(u_{i,\,\mu},v_{i,\,\mu}):=\sum_{k\in\Lambda_{\mu}^{(i)}}(u_{\mu})_{k}(v_{\mu})_{k}$ for $i=1,2$
and $\mu\in\{I,B\}$, 
and denote the standard $\ell^{2}$-norm by
$\|u_{i,\,\mu}\|=(u_{i,\,\mu},u_{i,\,\mu})^{1/2}$. 
  Since $\zeta(x)>0$ on $\Gamma$,
  the mass matrix $M$ is positive definite
  and $(M u_{i,B}, v_{i,B})$ becomes an inner product with weight $M$.
Hence, we denote a norm with the weight $M$ by $\|u_{i,B}\|_{M}=(M\,u_{i,B}, u_{i,B})^{\frac{1}{2}}$. 
Then there exist $\beta_{1}>0$ and $\beta_{2}>0$ such that 
$\beta_{1}\|u_{i,B}\|_{M}^{2}\leq \|u_{i,B}\|^{2}\leq \beta_{2}\|u_{i,B}\|_{M}^{2}$ 
holds for any $u_{i,B}$. 

We prepare two matrices in each subdomain to describe the linear system in a simpler way,
\begin{equation*}
  {\cal A}^{(i)}:=  \begin{bmatrix}
    A_{II}^{(i)} & A_{IB}^{(i)} \\    A_{BI}^{(1)} & A_{BB}^{(i)} 
  \end{bmatrix},\quad
  \widetilde{\cal A}^{(i)}:=  \begin{bmatrix}
    A_{II}^{(i)} & A_{IB}^{(i)} \\    A_{BI}^{(i)} & A_{BB}^{(i)} + M
  \end{bmatrix}\,.
\end{equation*}
  \begin{Lem}\label{lem:corecivity-omega1}
    There exists $\alpha_1>0$ such that
\begin{equation*}
\left(
{\cal A}^{(1)}
  [v_{1,I}\ \ v_{1,B}]^{T},
  [v_{1,I}\ \ v_{1,B}]^{T} \right)\geq{}\alpha_{1}(\|v_{1,I}\|^{2}+\|v_{1,B}\|^{2})
\geq{}\alpha_{1} \beta_{1}\|v_{1,B}\|_{M}^{2},
\end{equation*}
holds for any vector $[v_{1,I}\ \  v_{1,B}]^{T}$.
  \end{Lem}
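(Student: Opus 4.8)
The plan is to read the quadratic form on the left as the strain energy of the finite element displacement it represents, and then to invoke coercivity of $a_1$ together with the fact that $\mathcal{A}^{(1)}$ is a fixed symmetric positive definite matrix. Concretely, let $v_{1,h}\in V_{1,h}\subset V_1$ be the finite element function in the discrete subspace whose nodal coefficients on $\Lambda_I^{(1)}$ and $\Lambda_B^{(1)}$ are $v_{1,I}$ and $v_{1,B}$. By the definition of the blocks $A^{(1)}_{\mu\nu}$ through $a_1(\varphi^{(1)}_l,\varphi^{(1)}_k)$ one has $\left(\mathcal{A}^{(1)}[v_{1,I}\ v_{1,B}]^{T},[v_{1,I}\ v_{1,B}]^{T}\right)=a_1(v_{1,h},v_{1,h})$, while the concatenated Euclidean norm satisfies $\|[v_{1,I}\ v_{1,B}]^{T}\|^2=\|v_{1,I}\|^2+\|v_{1,B}\|^2$ by the definition of the $\ell^2$ inner product given just above the lemma. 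So the first inequality is exactly a positive-definiteness statement for $\mathcal{A}^{(1)}$ in the Euclidean norm.

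Next I would establish coercivity of $a_1$ on the continuous space $V_1$. This is where the standing assumption of Section~\ref{sec:5}, namely $\partial_D\Omega\subset\partial\Omega_1$, enters: it forces $\partial_D\Omega_1=\partial_D\Omega$ and hence $\H^{d-1}(\partial_D\Omega_1)=\H^{d-1}(\partial_D\Omega)>0$. Körn's second inequality \cite{D-L1976} then yields a constant $C>0$ with $\|w\|_{H^1(\Omega_1)}^2\le C\int_{\Omega_1}|e(w)|^2\,dx$ for all $w\in V_1$, the Dirichlet condition on $\partial_D\Omega_1$ ruling out the rigid-body modes on which the strain vanishes. Combining this with the uniform ellipticity $c_{klmn}\xi_{kl}\xi_{mn}\ge c_*|\xi|^2$ of the elasticity tensor gives $a_1(w,w)=\int_{\Omega_1}\sigma(w):e(w)\,dx\ge c_*\int_{\Omega_1}|e(w)|^2\,dx\ge (c_*/C)\|w\|_{H^1(\Omega_1)}^2$ for every $w\in V_1$, so $a_1$ is coercive.

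Since $V_{1,h}\subset V_1$ and the nodal basis map $[v_{1,I}\ v_{1,B}]^{T}\mapsto v_{1,h}$ is injective, the above shows $a_1(v_{1,h},v_{1,h})>0$ for every nonzero coefficient vector; together with the symmetry of $a_1$ (from $c_{klmn}=c_{mnkl}$) this makes $\mathcal{A}^{(1)}$ symmetric positive definite, so its smallest eigenvalue is strictly positive. Taking $\alpha_1:=\lambda_{\min}(\mathcal{A}^{(1)})>0$ gives the first inequality, because $\left(\mathcal{A}^{(1)}v,v\right)\ge\lambda_{\min}\|v\|^2=\alpha_1(\|v_{1,I}\|^2+\|v_{1,B}\|^2)$ for any $v=[v_{1,I}\ v_{1,B}]^{T}$. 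The second inequality is then immediate: discard the nonnegative term $\|v_{1,I}\|^2$ and apply the weight equivalence $\beta_1\|v_{1,B}\|_M^2\le\|v_{1,B}\|^2$ recorded just above the lemma.

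The only genuine content, and the step I expect to be the main obstacle, is the coercivity of $a_1$, i.e. the strict positive definiteness of $\mathcal{A}^{(1)}$ rather than mere positive semidefiniteness. Note that $\mathcal{A}^{(1)}$ carries no mass-matrix term (unlike $\widetilde{\mathcal{A}}^{(1)}$), so the only mechanism that eliminates the rigid-body kernel of the pure strain energy is the homogeneous Dirichlet condition built into $V_{1,h}$; this is precisely what $\H^{d-1}(\partial_D\Omega_1)>0$ guarantees, via Körn's inequality. Everything else, namely rewriting the quadratic form as $a_1(v_{1,h},v_{1,h})$ and passing from positive definiteness to the displayed bound, is routine linear algebra for a fixed mesh, and no $h$-uniformity of $\alpha_1$ is claimed or needed.
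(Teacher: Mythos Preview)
Your argument is correct and follows the same line as the paper's proof: both deduce the first inequality from positive definiteness of $\mathcal{A}^{(1)}$, which in turn rests on the Dirichlet boundary assumption $\partial_D\Omega\subset\partial\Omega_1$, and obtain the second inequality directly from the norm equivalence defining $\beta_1$. The paper simply asserts positive definiteness in one sentence, whereas you spell out the intermediate steps via K\"orn's inequality and the identification with $a_1(v_{1,h},v_{1,h})$; but the approach is the same.
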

  \begin{proof}\smartqed
    Since $  {\cal A}^{(1)}$ in $\Omega_{1}$ is positive definite due to
    the Dirichlet boundary $\partial_{D}\Omega\subset\partial\Omega_{1}$,
    there exists $\alpha_1 >0$ and the first inequality holds.
    The second one is clear from the definition of $\beta_1$.
    \qed
  \end{proof}

The Gauss-Seidel type iteration defined in $(\ref{uGS1})$ and $(\ref{uGS2})$ 
is written in
the following 
matrix presentation.
\begin{Alg}[Gauss-Seidel type iteration]{\rm
    Let $u_{2,B}^{0}$ be an initial guess
of $u_2$ 
    on $\Gamma$. From obtained data $u_{2,B}^{m-1}$ of $m-1$-step, approximate solution
    $[u_{1,I}^{m}\ u_{1,B}^{m}]^T$ and $[u_{2,I}^{m}\ u_{2,B}^{m}]^T$ are
generated by solving following two problems successively,
\begin{equation*}\label{eqn:GS-matrix}
\widetilde{{\cal A}}^{(1)}
\begin{bmatrix}u_{1,I}^{m} \\ u_{1,B}^{m}\end{bmatrix}
=\begin{bmatrix}f_{1,I} \\ f_{1,B}+M\,u_{2,B}^{m-1}\end{bmatrix}\ \text{ and then }\ %
\widetilde{{\cal A}}^{(2)}
\begin{bmatrix}u_{2,I}^{m} \\ u_{2,B}^{m}\end{bmatrix}
=\begin{bmatrix}f_{2,I} \\ f_{2,B}+M\,u_{1,B}^{m}\end{bmatrix}\,.
\end{equation*}
}
\end{Alg}
Let  $[u_{1,I}^{m}\ u_{1,B}^{m}]^T$ and $[u_{2,I}^{m}\ u_{2,B}^{m}]^T$
($m=1,2,\cdots$) 
be the solution of the Gauss-Seidel type iteration
and let $[u_{1,I}^{*}\ u_{1,B}^{*}]^T$ and $[u_{2,I}^{*}\ u_{2,B}^{*}]^T$
be the one of the monolithic system \eqref{eqn:mono-matrix}. 
We define the error between them by
\begin{align}\label{err}
  [e_{i,I}^{m}\ \ e_{i,B}^{m}]^{T}:=
  [u_{i,I}^{m}\ \ u_{i,B}^{m}]^{T}-[u_{i,I}^{*}\ \ u_{i,B}^{*}]^{T}\quad (i=1,2)
\end{align}
We have the following convergence estimates for the
Gauss-Seidel type iteration.
\begin{Lem}\label{e-lem}
  The error on the boundary admits the following estimates:
\begin{equation*}
  \|e_{1,B}^{m}\|_{M}\leq r\,\|e_{2,B}^{m-1}\|_{M}\,,
  \quad  \|e_{2,B}^{m}\|_{M}\leq \|e_{1,B}^{m}\|_{M}
  \quad (m=1,2,\cdots),
\end{equation*}
where $r:=1/(1+\alpha_{1}\beta_{1})<1$.
\end{Lem}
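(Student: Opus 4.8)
The plan is to derive the two error inequalities by subtracting the monolithic equations from the Gauss-Seidel updates and testing the resulting error equations against appropriately chosen vectors, so that the coercivity from Lemma~\ref{lem:corecivity-omega1} and the weighted-norm relation $\beta_1\|\cdot\|_M^2\le\|\cdot\|^2$ can be brought to bear. Because both $\widetilde{\cal A}^{(1)}$ and $\widetilde{\cal A}^{(2)}$ are built from the same data as the monolithic matrix \eqref{eqn:mono-matrix}, the exact solution $[u_{i,I}^*\ u_{i,B}^*]^T$ satisfies the block rows of \eqref{eqn:mono-matrix}; rewriting those rows in the $\widetilde{\cal A}^{(i)}$ form shows that the monolithic solution obeys the same fixed-point relations as the iterates but with $u^m$ and $u^{m-1}$ both replaced by $u^*$. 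Subtracting then gives the clean error recursions
\begin{equation*}
\widetilde{\cal A}^{(1)}[e_{1,I}^m\ e_{1,B}^m]^T=[0\ \ M\,e_{2,B}^{m-1}]^T,
\qquad
\widetilde{\cal A}^{(2)}[e_{2,I}^m\ e_{2,B}^m]^T=[0\ \ M\,e_{1,B}^m]^T.
\end{equation*}

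First I would treat the $\Omega_1$ step. Testing its error equation against the error vector itself yields
\begin{equation*}
\left(\widetilde{\cal A}^{(1)}[e_{1,I}^m\ e_{1,B}^m]^T,[e_{1,I}^m\ e_{1,B}^m]^T\right)=(M\,e_{2,B}^{m-1},e_{1,B}^m).
\end{equation*}
On the left, since $\widetilde{\cal A}^{(1)}={\cal A}^{(1)}+\mathrm{diag}(0,M)$, I would split off the $M$-block to get a lower bound of the form $\alpha_1\beta_1\|e_{1,B}^m\|_M^2+\|e_{1,B}^m\|_M^2=(1+\alpha_1\beta_1)\|e_{1,B}^m\|_M^2$, using Lemma~\ref{lem:corecivity-omega1} on the ${\cal A}^{(1)}$ part and the definition of $\|\cdot\|_M$ on the added $M$-block. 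On the right, the $M$-weighted pairing is handled by Cauchy--Schwarz in the $M$-inner product, $(M\,e_{2,B}^{m-1},e_{1,B}^m)\le\|e_{2,B}^{m-1}\|_M\|e_{1,B}^m\|_M$. Combining and dividing by $\|e_{1,B}^m\|_M$ gives $(1+\alpha_1\beta_1)\|e_{1,B}^m\|_M\le\|e_{2,B}^{m-1}\|_M$, i.e.\ the first inequality with $r=1/(1+\alpha_1\beta_1)<1$.

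For the $\Omega_2$ step the same test vector gives $\left(\widetilde{\cal A}^{(2)}[\,\cdot\,],[\,\cdot\,]\right)=(M\,e_{1,B}^m,e_{2,B}^m)$. Here the key difference is that $\Omega_2$ carries no Dirichlet boundary, so there is no coercivity estimate on ${\cal A}^{(2)}$ alone; I would therefore discard the (nonnegative, since ${\cal A}^{(2)}$ is at least positive semidefinite) ${\cal A}^{(2)}$-contribution and retain only the $M$-block, obtaining the lower bound $\|e_{2,B}^m\|_M^2$. Cauchy--Schwarz on the right then gives $\|e_{2,B}^m\|_M^2\le\|e_{1,B}^m\|_M\|e_{2,B}^m\|_M$, hence the second inequality $\|e_{2,B}^m\|_M\le\|e_{1,B}^m\|_M$.

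The main obstacle I anticipate is the careful bookkeeping in the first step: I must verify that the interior ${\cal A}^{(1)}$-quadratic form contributes a genuine $\alpha_1\beta_1\|e_{1,B}^m\|_M^2$ lower bound \emph{after} the $M$-block has already been separated out, so that the factor $1+\alpha_1\beta_1$ (not merely $1$) is correctly obtained. This requires that Lemma~\ref{lem:corecivity-omega1} be applied to the full vector $[e_{1,I}^m\ e_{1,B}^m]^T$ and that the $\|v_{1,I}\|^2$ term simply be dropped as nonnegative, leaving the clean coercivity constant on the boundary component. The $\Omega_2$ estimate is comparatively routine precisely because the lack of coercivity forces us to use only the positive-definite $M$-block, which conveniently yields the sharp constant $1$.
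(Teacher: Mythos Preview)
Your proposal is correct and follows essentially the same approach as the paper: derive the error recursions $\widetilde{\cal A}^{(i)}[e_{i,I}^m\ e_{i,B}^m]^T=[0\ \ M\,e_{\cdot,B}^{\cdot}]^T$, test each against its own error vector, split $\widetilde{\cal A}^{(i)}={\cal A}^{(i)}+\mathrm{diag}(0,M)$, apply Lemma~\ref{lem:corecivity-omega1} for $i=1$ and mere positive semidefiniteness of ${\cal A}^{(2)}$ for $i=2$, and finish with Cauchy--Schwarz in the $M$-inner product. The only cosmetic caveat is that dividing by $\|e_{1,B}^m\|_M$ (or $\|e_{2,B}^m\|_M$) tacitly assumes it is nonzero; the trivial case should be noted, but otherwise your argument matches the paper's proof step for step.
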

\begin{proof}
  From the definition of the errors \eqref{err},
they satisfy the following linear systems:
\begin{equation}\label{Ae}
\widetilde{\cal A}^{(1)}
 [e_{1,I}^{m}\ \ e_{1,B}^{m}]^{T}=
 [0 \ \ M\,e_{2,B}^{m-1}]^{T},
 \quad
\widetilde{\cal A}^{(2)}
 [e_{2,I}^{m}\ \ e_{2,B}^{m}]^{T}=
  [0 \ \ M\,e_{1,B}^{m}]^{T}\,.
\end{equation}
Taking inner product of $[e_{1,I}^{m}\ \ e_{1,B}^{m}]^{T}$ 
with the left equation of \eqref{Ae},  and of $[e_{2,I}^{m}\ \ e_{2,B}^{m}]^{T}$ with the right, we obtain 
\begin{align}
&\left({\cal A}^{(1)}
[e_{1,I}^{m}\ \ e_{1,B}^{m}]^{T}, [e_{1,I}^{m}\ \ e_{1,B}^{m}]^{T}\right)
+\| e_{1,B}^{m}\|_M^2=(M\,e_{2,B}^{m-1},e_{1,B}^{m})
\le \| e_{2,B}^{m-1}\|_M \| e_{1,B}^{m}\|_M\,,\label{Aee1}\\ 
&\left({\cal A}^{(2)}
[e_{2,I}^{m}\ \ e_{2,B}^{m}]^{T}, [e_{2,I}^{m}\ \ e_{2,B}^{m}]^{T}\right)
+\| e_{2,B}^{m}\|_M^2=(M\,e_{1,B}^{m},e_{2,B}^{m})  
\le \| e_{1,B}^{m}\|_M \| e_{2,B}^{m}\|_M\,.\label{Aee2}
\end{align}
From Lemma~\ref{lem:corecivity-omega1} and \eqref{Aee1}, we have
\begin{equation*}
  (1+\alpha_{1}\beta_{1})\|e_{1,B}^{m}\|_{M}^{2}
\leq\|e_{2,B}^{m-1}\|_{M}\|e_{1,B}^{m}\|_{M}\,.
\end{equation*}
This gives the first inequality. The second inequality is obtained from
\eqref{Aee2} with positive semi-definiteness of ${\cal A}^{(2)}$. 
\smartqed \qed
\end{proof}
\begin{Th}
There exists $C>0$ such that the following error estimate holds:
\begin{equation*}
  \sqrt{\|e_{i,I}^{m}\|^{2}+\|e_{i,B}^{m}\|^{2}}\le Cr^{m+i-2}  \|e_{2,B}^{0}\|_{M} 
  \quad (i=1,2,~m=1,2,\cdots).
\end{equation*}
where $r:=1/(1+\alpha_{1}\beta_{1})<1$.
\end{Th}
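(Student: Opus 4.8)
The plan is to bound the full subdomain error vector in the standard $\ell^2$-norm by its boundary part measured in the $M$-norm, and then to feed in the geometric decay on the boundary already established in Lemma~\ref{e-lem}. The two lemmas assembled give almost everything: Lemma~\ref{e-lem} yields $\|e_{1,B}^m\|_M\le r\,\|e_{2,B}^{m-1}\|_M$ and $\|e_{2,B}^m\|_M\le\|e_{1,B}^m\|_M\le r\,\|e_{2,B}^{m-1}\|_M$, so by induction $\|e_{1,B}^m\|_M\le r^m\|e_{2,B}^0\|_M$ and $\|e_{2,B}^m\|_M\le r^m\|e_{2,B}^0\|_M$. Writing these two bounds as $\|e_{i,B}^m\|_M\le r^{m+i-2}\|e_{2,B}^0\|_M$ produces exactly the exponent appearing in the claimed estimate, which signals that the $r^{m+i-2}$ factor should come from the boundary decay and the remaining work is to pass from the $M$-norm boundary bound to the full $\ell^2$-norm of $[e_{i,I}^m\ e_{i,B}^m]^T$.

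To make that passage, first I would reuse the coercivity-type estimate from \eqref{Aee1} and \eqref{Aee2}. From \eqref{Aee1}, dropping the nonnegative term $\|e_{1,B}^m\|_M^2$ on the left, we get $\bigl({\cal A}^{(1)}[e_{1,I}^m\ e_{1,B}^m]^T,[e_{1,I}^m\ e_{1,B}^m]^T\bigr)\le\|e_{2,B}^{m-1}\|_M\|e_{1,B}^m\|_M$; combining with the positive-definiteness lower bound from Lemma~\ref{lem:corecivity-omega1}, $\alpha_1(\|e_{1,I}^m\|^2+\|e_{1,B}^m\|^2)\le\|e_{2,B}^{m-1}\|_M\|e_{1,B}^m\|_M$. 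Using $\|e_{1,B}^m\|\le\sqrt{\beta_2}\|e_{1,B}^m\|_M$ on the right and the already-proved boundary bounds, the right side is controlled by a constant times $r^{2m-1}\|e_{2,B}^0\|_M^2$, which gives $\|e_{1,I}^m\|^2+\|e_{1,B}^m\|^2\le C_1 r^{2m-1}\|e_{2,B}^0\|_M^2$, hence $\sqrt{\|e_{1,I}^m\|^2+\|e_{1,B}^m\|^2}\le C_1' r^{m-1/2}\|e_{2,B}^0\|_M$.

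The $i=2$ case is handled identically from \eqref{Aee2}: since ${\cal A}^{(2)}$ is only positive semi-definite rather than strictly positive definite (there is no Dirichlet condition on $\partial\Omega_2$ under the standing assumption $\partial_D\Omega\subset\partial\Omega_1$), I cannot apply a Korn/Lemma~\ref{lem:corecivity-omega1} argument to ${\cal A}^{(2)}$ directly. This is the one genuine obstacle. The fix is to keep the full matrix $\widetilde{\cal A}^{(2)}={\cal A}^{(2)}+\mathrm{diag}(0,M)$, which \emph{is} positive definite because adding the positive-definite $M$ on the boundary block removes the rigid-motion kernel; so I would instead estimate $\bigl(\widetilde{\cal A}^{(2)}[e_{2,I}^m\ e_{2,B}^m]^T,[e_{2,I}^m\ e_{2,B}^m]^T\bigr)$ from below by $\alpha_2(\|e_{2,I}^m\|^2+\|e_{2,B}^m\|^2)$ for some $\alpha_2>0$, and from above this quadratic form equals $(M e_{1,B}^m,e_{2,B}^m)\le\|e_{1,B}^m\|_M\|e_{2,B}^m\|_M$ by the second line of \eqref{Ae}. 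Combining with $\|e_{2,B}^m\|_M\le r^m\|e_{2,B}^0\|_M$ and $\|e_{1,B}^m\|_M\le r^m\|e_{2,B}^0\|_M$ gives $\|e_{2,I}^m\|^2+\|e_{2,B}^m\|^2\le C_2 r^{2m}\|e_{2,B}^0\|_M^2$, i.e. the bound with exponent $r^{m}=r^{m+2-2}$.

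Finally I would collect the two cases: the $i=1$ bound carries exponent $r^{m-1/2}\le r^{m-1}=r^{m+1-2}$ (using $r<1$), and the $i=2$ bound carries $r^{m}=r^{m+2-2}$, so both fit under the single stated form $C r^{m+i-2}\|e_{2,B}^0\|_M$ upon taking $C:=\max(C_1',C_2^{1/2})$. The main thing to be careful about is precisely the asymmetry between the two subdomains: $\Omega_1$ inherits strict coercivity from the Dirichlet data via Lemma~\ref{lem:corecivity-omega1}, whereas $\Omega_2$ must borrow its coercivity from the interface mass matrix $M$ inside $\widetilde{\cal A}^{(2)}$, and the existence of the corresponding constant $\alpha_2>0$ should be noted explicitly rather than invoked as a twin of Lemma~\ref{lem:corecivity-omega1}.
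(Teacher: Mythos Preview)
Your argument is correct and reaches the same conclusion, but the paper takes a slightly cleaner route for the final lifting step. After establishing the boundary decay $\|e_{i,B}^m\|_M\le r^m\|e_{2,B}^0\|_M$ from Lemma~\ref{e-lem} (exactly as you do), the paper does not go back to the coercivity quadratic-form estimates; it simply applies $(\widetilde{\cal A}^{(i)})^{-1}$ to the error equations \eqref{Ae}, obtaining
\[
\sqrt{\|e_{1,I}^m\|^2+\|e_{1,B}^m\|^2}\le\bigl\|(\widetilde{\cal A}^{(1)})^{-1}\bigr\|\,\|M e_{2,B}^{m-1}\|,\qquad
\sqrt{\|e_{2,I}^m\|^2+\|e_{2,B}^m\|^2}\le\bigl\|(\widetilde{\cal A}^{(2)})^{-1}\bigr\|\,\|M e_{1,B}^{m}\|,
\]
and then feeds in the boundary decay to read off the exponents $r^{m-1}$ and $r^{m}$ directly. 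Your use of the coercivity lower bound on the quadratic forms in place of the inverse operator norm has the same content (positive definiteness of $\widetilde{\cal A}^{(i)}$ is precisely what makes the inverse bounded), but it forces a case split and an intermediate exponent $r^{m-1/2}$ for $i=1$ that you then relax to $r^{m-1}$. On the other hand, your explicit remark that ${\cal A}^{(2)}$ is only semi-definite and that one must pass to $\widetilde{\cal A}^{(2)}$ to recover strict positivity is a genuine point that the paper leaves implicit by simply asserting that $\widetilde{\cal A}^{(2)}$ is invertible; your justification via the rigid-motion kernel is the right one. The stray mention of $\|e_{1,B}^m\|\le\sqrt{\beta_2}\|e_{1,B}^m\|_M$ in your $i=1$ step is unnecessary, since the right-hand side of your inequality is already in the $M$-norm.
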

\begin{proof}
  From Lemma~\ref{e-lem}, we have
\begin{equation*}
\|e_{2,B}^{m}\|_{M}\leq \|e_{1,B}^{m}\|_{M}\leq r\,\|e_{2,B}^{m-1}\|_{M}\,
  \quad (m=1,2,\cdots). 
\end{equation*}  
  These inequalities imply
  \begin{align}\label{erm}
    \|e_{i,B}^{m}\|_{M}\leq r^m\,\|e_{2,B}^{0}\|_{M}
    \quad (i=1,2,~m=1,2,\cdots).
  \end{align}
  Since the matrices $\widetilde{\cal A}^{(i)}$ $(i=1,2)$ are invertible,
  from \eqref{Ae}, we obtain
  \begin{align*}
    &\sqrt{ \| e_{1,I}^{m}\|^2+\| e_{1,B}^{m}\|^2}\le
    \left\| (\widetilde{\cal A}^{(1)})^{-1}\right\|\, \| Me_{2,B}^{m-1}\|
   =  \left\| (\widetilde{\cal A}^{(1)})^{-1}\right\|\, \|M^{1/2}e_{2,B}^{m-1}\|_{M}\,,\\ 
    &\sqrt{ \| e_{2,I}^{m}\|^2+\| e_{2,B}^{m}\|^2}\le
    \left\| (\widetilde{\cal A}^{(2)})^{-1}\right\|\, \| Me_{1,B}^{m}\|
    =  \left\| (\widetilde{\cal A}^{(2)})^{-1}\right\|\, \|M^{1/2}e_{1,B}^{m}\|_{M}\,. \\ 
  \end{align*}
  Together with the estimate \eqref{erm} and with the fact that $M$ is positive definite, there exists a $C>0$ such that
the assertion of the theorem holds.
  \smartqed \qed
\end{proof}
  
\begin{Rem}
The Gauss-Seidel type iteration is straightforwardly extended to SOR type iteration by introducing 
a relaxation parameter.
\end{Rem}

\subsection{Numerical results}
Now we
show numerical verification on convergence of Gauss-Seidel type iteration using a manufactured solution, 
\begin{equation*}
  \begin{bmatrix}
  u_{1}\\ u_{2}\\ u_{3}
  \end{bmatrix}
= \begin{bmatrix}
\sin((\pi/8)\, x_{1})\times\cos((\pi/8)\, x_{2})\times\sin((\pi/16)\, x_{3})\\
\cos((\pi/16)\, x_{1})\times\sin((\pi/8)\, x_{2})\times\cos((\pi/8)\, x_{3})\\
\cos((\pi/8)\, x_{1})\times\sin((\pi/16)\, x_{2})\times\sin((\pi/8)\, x_{3})
\end{bmatrix}
\end{equation*}
in $\Omega_{1}=(0,4)\times(0,2)\times (0,4)$,
$\Omega_{2}=(0,4)\times(2, 4)\times (0,4)$, and
$\partial_{D}\Omega=\{(x,y,z)\,;\,0<x<4, y=0, 0<z<4\}$
with corresponding inhomogeneous Dirichlet
data $g(x)$,
the load $f(x)$ on $\Omega$, and the surface traction $q(x)$ on $\partial_N\Omega$. 
For simplicity, we put $\zeta(x)\equiv 1$.
Figure \ref{fig:1} shows relative errors of finite element solution discretized with P1 
element solved by the monolithic formulation, which 
ensures the 1st order approximation error of the solution, $O(h)$ with mesh size $h$.
\begin{figure}[b]
  \begin{minipage}{\textwidth}
    \begin{center}
      \includegraphics[width=0.5\textwidth]{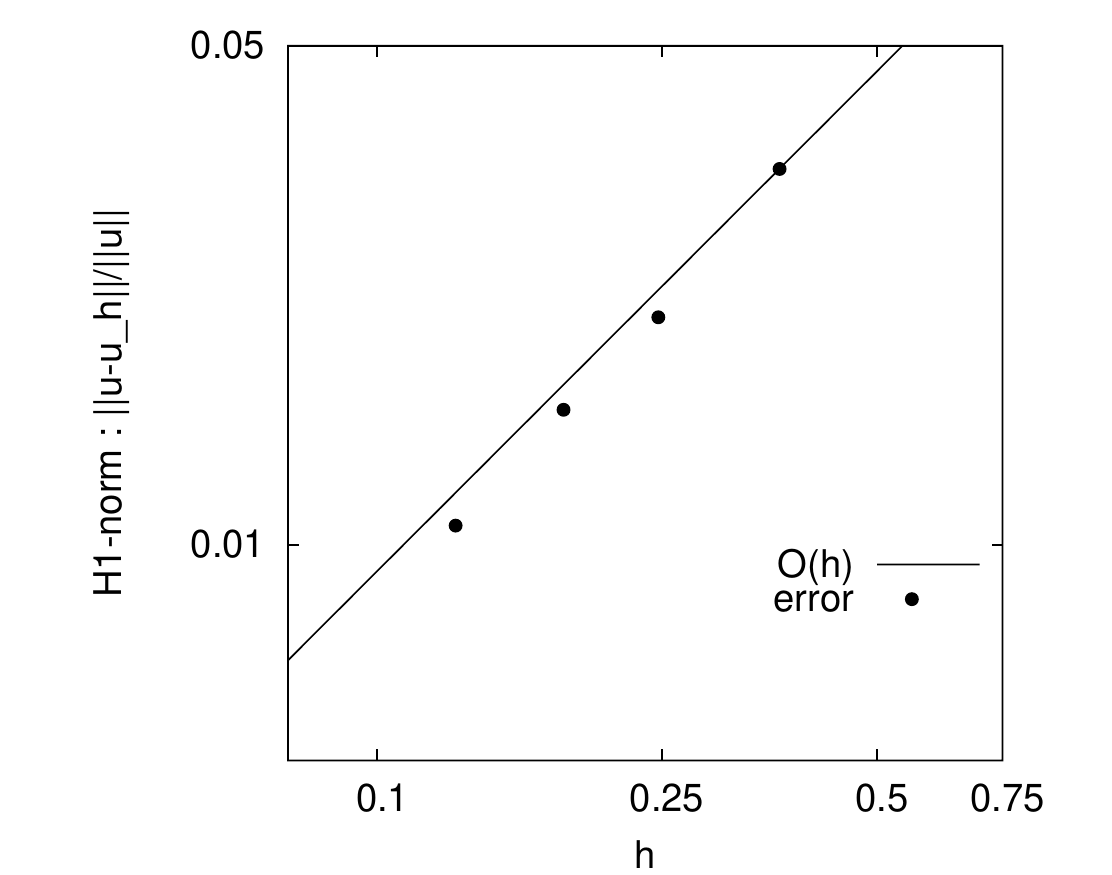}
    \end{center}
    \caption{Relative errors of P1 finite element solution computed by monolithic formulation}
\label{fig:1}
  \end{minipage}
\end{figure}
Convergence of Gauss-Seidel type 
iteration to the solution by the monolithic formulation with the relative error measured by $\|\cdot\|_{H^{1}(\Omega)}$ is shown in the left of Figure \ref{fig:2}, and
relative error to the manufactured solution in the right of Figure \ref{fig:2}. 
Here mesh subdivisions 
with $h_{\max}= 0.36551$ in $20\times 20 \times 20$, and
 $h_{\max}=0.12878 $ in $60\times 60 \times 60$ are used. %
We can see the convergence 
does not depend on the mesh size,
and the same relative error as one by monolithic formulation to the manufactured solution is obtained after certain iterations, though Gauss-Seidel type iteration continues to converge.  
\begin{figure}[tb]
  \begin{minipage}{\textwidth}
    \begin{center}
      \includegraphics[width=0.45\textwidth]{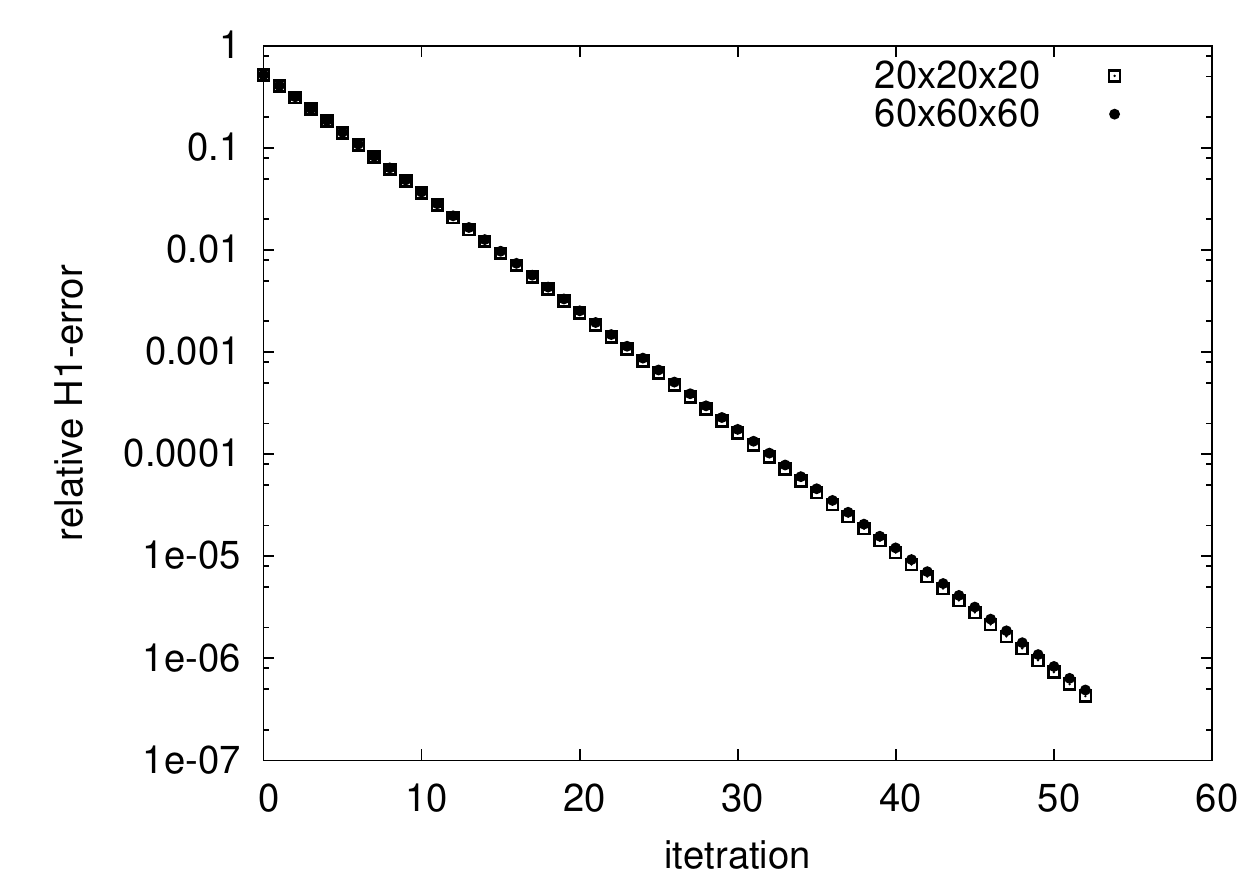} \ %
      \includegraphics[width=0.45\textwidth]{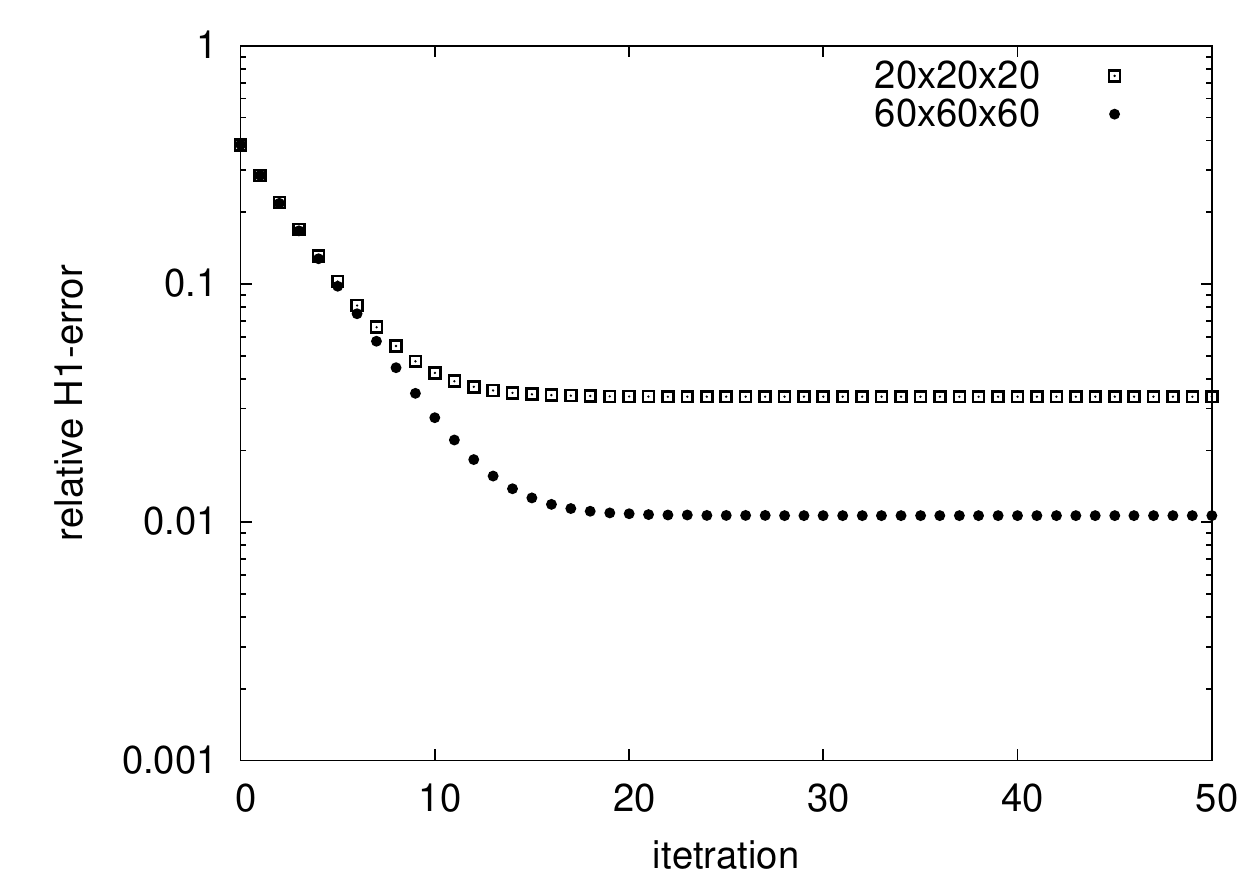}
    \end{center}
    \caption{Convergence history of Gauss-Seidel type iteration}
\label{fig:2}
  \end{minipage}
\end{figure}
\subsection{Computational efficiency}
We used FreeFem++ software package and Dissection sparse direct solver~\cite{SuzukiRoux2014} to obtain finite element solution. %
Table \ref{tab:1} shows computational time of the direct solver for the monolithic formulation and the Gauss-Seidel type iteration, using Intel Core processor i7-6770HQ with 4 cores running at 2.60GHz. %
$LDU$-factorization is performed before starting iteration and forward/backward substitutions are repeated because the stiffness matrix in each subdomain does not change during the iteration. %
Since computational complexity of $LDU$-factorization of sparse matrix 
with P1 finite element is more than $O(N^{2})$ with number of unknowns $N$, 
factorization cost for sub-matrices in Gauss-Seidel type is less than half, $2\times (N/2)^{2}/N^{2}=1/2$ of the one for monolithic formulation.
We can observe shorter CPU time for factorization of Gauss-Seidel type, which is shown as number in parentheses, when the problem size is enough large.
In Dissection solver, numerical factorization is fully parallelized but
there exist some sequential processing parts, e.g. fill-in analysis of the sparse matrix, which masks speed-up of elapsed time for the factorization in Gauss-Seidel type.
We also observe that elapsed time of iteration in Gauss-Seidel type with selected iterating number to obtain appropriately approximate solution is less than time
of the factorization. Hence if we can find a reasonable criteria to stop iteration, the Gauss-Seidel type iteration becomes more efficient than monolithic formulation.
\begin{table}[htb]
  \begin{minipage}{\textwidth}
    \begin{center}
      \caption{Elapsed time of monolithic and Gauss-Seidel methods by FreeFem++ and Dissection solver with 4 cores. CPU time for factorization are also shown within parentheses.}
      \begin{tabular}{ccccrrrrr}
 mesh & $h_{\max}$ & solver & \# unknowns &\mbox{}\hspace{-3mm}\# iteration\hspace{-4mm}\mbox{} & \multicolumn{4}{c}{time (sec.)}\\
& & &  & & \multicolumn{2}{c}{factorization} & iteration & total \\
        \hline
       $20\times 20\times 20$ & 0.36551 & monolithic & 30,870& --- & 1.364 & ( 3.147 )& --- \ \ \mbox{}& 1.364 \\
       &  & Gauss-Seidel & 14,742+16,128 &12 & 1.339 &( 2.752 ) & 0.245 & 1.585 \\
         \hline
       $30\times 30\times 30$ & 0.24729  &  monolithic & 97,743 &--- &  6.901 & ( 18.588 ) & --- \ \ \mbox{}& 6.901\\
      &  & Gauss-Seidel & 46,965+50,778 & 15 & 5.082 & ( 12.425 )& 1.286 & 6.368\\

        \hline
        $60\times 60\times 60$ & 0.12783  & monolithic & 748,470 & --- & 174.123& ( 603.567 ) & --- \ \ \mbox{}& 174.123 \\
       & & Gauss-Seidel & 368,928+379,542 & 20 & 118.973 & ( 316.482 ) & 25.559 &  144.532\\
        \hline
      \end{tabular}
    \end{center}
\label{tab:1}
\end{minipage}
\end{table}

\section{Conclusion}
We considered a stationary deformation problem for glued elastic bodies
and have established solvability of
its weak formulation.
We proposed a kind of alternating iteration scheme to approximate 
the problem and showed that the scheme has a nature of  
alternating minimizing algorithm with respect to the total energy. 

We proved the convergence for the Gauss-Seidel type iteration
in a rate of $O(r^m)$ with $r\in (0,1)$ in discrete setting.
Computational efficiency of the monolithic and   
alternating iterative algorithms have been verified with a three dimensional
problem.
The
alternating iterative method requires smaller computational resource
than the monolithic method
and also it has an advantage in computation time
when the degree of freedom is sufficiently large.

\begin{acknowledgement}
  The authors are grateful to Prof. Frederic Hecht for his useful comments on
  numerical computation with FreeFem++.
This work is supported by JSPS KAKENHI Grant Number JP16H03946 and JP17H02857.
\end{acknowledgement}


\begin{thebibliography}{99.}%
\bibitem{Ciarlet1978}
  Ciarlet, P. G.: The Finite Element Method for Elliptic Problems.
  North-Holland, Amsterdam (1978)

\bibitem{D-L1976}
Duvant, G., Lions, J. L.:
Inequalities in Mechanics and Physics.
Springer (1976).

\bibitem{Fremond2002}
  Fr\'emond, M.:
  Non-Smooth Thermomechanics.
  Springer (2002).

\bibitem{RSZ2009}
Roub\'i\v{c}ek, T., Scardia, L., Zanini, C.:
Quasistatic delamination problem.
Cont. Mech. Themodynam., {\bf 21} 223-235 (2009).

\bibitem{Scala2017}
Scala, R.:
Limit of viscous dynamic processes in delamination as the viscosity and inertia vanish.
ESAIM: COCV, {\bf 23}, 593 - 625 (2017).

\bibitem{SuzukiRoux2014}
Suzuki, A., Roux, F.-X.:
A dissection solver with kernel detection for symmetric finite element 
matrices on shared memory computers.
\textit{ International Journal for Numerical Methods in Engineering}.
\textbf{100}, 136--164 (2014) doi: 10.1002/nme.4729.

\bibitem{Yoneda2018}
  Yoneda, T.:
  Finite element analysis of a vibration-delamination model.
  Master Thesis, March 2018, Graduate School of Natural Science and Technology,
  Kanazawa University, 48p. (2018). (in Japanese)
\end{thebibliography}
\end{document}